\newcommand{\R}{{ \mathbb  R  }}
\newcommand{\Z}{  \mathbb Z }
\newcommand{\N}{  \mathbb N }
\renewcommand{\Re}{\mbox{Re\,}}
\newtheorem{Thm}{Theorem}[section]
\newtheorem{Lemma}[Thm]{Lemma}
\newtheorem{Prop}[Thm]{Proposition}
\newcommand{\dsize}{\displaystyle}
\numberwithin{equation}{section}
\newcommand{\CC} {\mathcal C}
\theoremstyle{remark}
\begin{document}
	\begin{center}
		\vskip 1cm{\Large\bf 
			Approximating the divisor functions 
		}
		\vskip 1cm 
		\large
	
		 Andrew Echezabal\footnote{{\it Address}		Andrew Echezabal, Florida International university, department of mathematics and statistics, Miami, FL 33199
		 	andrewechezabal@gmail.com}-
			Laura De Carli\footnote{{\it Address} 
			Laura De Carli, Florida International university, department of mathematics and statistics, Miami, FL 33199
			Orcid:  0000-0003-4420-6433, decarlil@fiu.edu}
		-
		Maurizio Laporta\footnote{$^*$Corresponding author. M.Laporta,
			Universit\`a degli Studi di Napoli,
			Dipartimento di Matematica e Applicazioni,
			Complesso di Monte S. Angelo, Via Cinthia,
			80126 Napoli (NA), Italy.\par
			Orcid: 0000-0001-5091-8491. mlaporta@unina.it}$^*$
	\end{center}

	\begin{abstract} We  exploit the properties of  a sequence of functions  that approximate the divisor functions and combine them with an analytical formula  of a delta-like sequence to  give a new  proof of a theorem of Gr\"onwall on the asymptotic of the divisor functions. 
		\footnote{\noindent 2010 {\it Mathematics Subject Classification}: 
			11N37 
			42A10  
			40A25 
			
			\noindent \emph{Keywords:} divisor functions, Gr\"onwall's theorem, delta-like sequences
		}
		
\end{abstract}

	\section{Introduction and statement of the main results}\label{intro}

	 Given a complex number $\alpha$, for any $m\in\N   =\{ 1,2,\ldots\}$ 
	 the sum of positive divisors function $\sigma_{\alpha}(m)$ is defined as the sum of the $\alpha$th powers of the positive divisors of $m$ \cite{HW}. It is usually expressed as
	 $$
	 \sigma_{\alpha}(m)=\sum_{k\mid m }k^{\alpha}, 
	 $$
	 where $k|m$ means that $k\in\N$ is a divisor of $m$. 
	 The functions so defined are called 
	 {\it divisor functions}. They play an important role in  number theory, namely within the study of divisibility properties of integers and the distribution of prime numbers. Such functions appear in connection with the Riemann zeta function $$
	 \zeta(s)=\sum_{k=1}^\infty \frac{1}{k^s},\quad (\Re s>1).
	 $$ 
	 Indeed, given any $\alpha\in\R$, it is plain that
	 $$
	 \sigma_\alpha(m)=\sum_{k\mid m }\left(\frac{m}{k}\right)^{\alpha}\le m^{\alpha}\sum_{k=1}^m \frac{1}{k^\alpha}.
	 $$
	 In particular, for $\alpha>1$ this yields
	$$
	G_\alpha(m):=\frac{\sigma_{\alpha}(m)}{m^\alpha}\le 
	 \zeta(\alpha),\ \forall m\in\N.
	 $$
	In 1913, Gr\"onwall \cite{Gr} showed in a quite elementary way that if $\alpha>1$, then
	 \begin{equation}\label{Gronwall}
	 \limsup_{m\to\infty}G_\alpha(m)= \zeta(\alpha).
	 \end{equation}
 In fact, in view of the previous inequality, immediately he derives \eqref{Gronwall} after proving that 
 $$
 \lim_{m\to\infty}G_\alpha(P_m)= \zeta(\alpha),
 $$
 where
 $$
 P_m:=\prod_{i=1}^mp_i^m
 $$
 is the $m$th power of the product of the first $m$ prime numbers $p_1=2,p_2=3,p_3=5, \ldots,p_m$.
 
 By arguing in an alternative way, in \S\ref{proof} we prove the following result, which   still yields Gr\"onwall's equation \eqref{Gronwall}.
 \begin{Thm}\label{T-Est-divisors}
 	For every  real number $\alpha>1$  one has
 	$$
 	\lim_{m\to\infty}G_\alpha(m!)=\lim_{m\to\infty}G_\alpha\left([1,2,\ldots,m]\right)= \zeta(\alpha),
 	$$
 	where $[1,2,\ldots,m]$ is the least common multiple of $1,2,\ldots,m$.
 \end{Thm}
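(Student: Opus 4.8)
The plan is to reduce the statement to the Euler product for $\zeta(\alpha)$ by exploiting the multiplicativity of $\sigma_\alpha$. Writing $n=\prod_p p^{a_p(n)}$ for the prime factorization of $n\in\N$ and using that $G_\alpha(n)=\sigma_\alpha(n)/n^\alpha$ is multiplicative with $G_\alpha(p^a)=1+p^{-\alpha}+\cdots+p^{-a\alpha}=\frac{1-p^{-(a+1)\alpha}}{1-p^{-\alpha}}$, one obtains
$$
G_\alpha(n)=\prod_{p\mid n}\frac{1-p^{-(a_p(n)+1)\alpha}}{1-p^{-\alpha}}.
$$
The two integers $m!$ and $[1,2,\ldots,m]$ are divisible by exactly the primes $p\le m$, each to a power $a_p:=a_p(n)\ge 1$, so
$$
G_\alpha(n)=\Big(\prod_{p\le m}\frac{1}{1-p^{-\alpha}}\Big)\prod_{p\le m}\big(1-p^{-(a_p+1)\alpha}\big)=:E_\alpha(m)\,R_\alpha(n),
$$
where $E_\alpha(m)\to\zeta(\alpha)$ as $m\to\infty$ by Euler's product formula. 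It then remains only to show that the correction factor satisfies $R_\alpha(n)\to 1$.

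For this I would use the elementary inequality $0\le 1-\prod_{p\le m}(1-x_p)\le\sum_{p\le m}x_p$ (valid for $x_p\in[0,1]$) with $x_p=p^{-(a_p+1)\alpha}$, so that $|R_\alpha(n)-1|\le\sum_{p\le m}p^{-(a_p+1)\alpha}$, and it suffices to prove that this sum vanishes. For $n=[1,2,\ldots,m]$ the exponent $a_p$ is the largest integer with $p^{a_p}\le m$, hence $p^{a_p+1}>m$ and $\sum_{p\le m}p^{-(a_p+1)\alpha}<\pi(m)\,m^{-\alpha}\le m^{1-\alpha}\to 0$ since $\alpha>1$. For $n=m!$ one has the Legendre-type bound $a_p\ge\lfloor m/p\rfloor$; splitting the sum at $\sqrt m$, the primes $p\le\sqrt m$ satisfy $a_p+1\ge\sqrt m$, so their total contribution is at most $\sqrt m\,2^{-\alpha\sqrt m}\to 0$, while for $\sqrt m<p\le m$ one uses only $a_p+1\ge 2$ together with $p^{-2\alpha}<m^{-\alpha}$ to bound that part by $\pi(m)\,m^{-\alpha}\le m^{1-\alpha}\to 0$. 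In both cases $R_\alpha(n)\to 1$, giving $G_\alpha(m!)\to\zeta(\alpha)$ and $G_\alpha([1,\ldots,m])\to\zeta(\alpha)$. Since $G_\alpha(k)\le\zeta(\alpha)$ for every $k$ and $m!\to\infty$, Gr\"onwall's equation $\limsup_{k\to\infty}G_\alpha(k)=\zeta(\alpha)$ follows as well.

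I expect the only genuine content to be the lower bound on the $p$-adic valuations $a_p$ that forces the tail $\sum_{p\le m}p^{-(a_p+1)\alpha}$ to zero; everything else is bookkeeping around the Euler product. The mild obstacle is that in $m!$ the large primes $p$ near $m$ appear only to the first power, so the crude bound $a_p\ge\lfloor m/p\rfloor$ must be combined with the observation that $p>\sqrt m$ already makes $p^{-2\alpha}$ smaller than $m^{-\alpha}$; splitting the range of summation at $\sqrt m$ resolves this without any prime-counting input beyond the trivial $\pi(m)\le m$. This is a direct elementary route, in the same spirit as but shorter than Gr\"onwall's original argument with the numbers $P_m$, and independent of the delta-like sequences used elsewhere in the paper.
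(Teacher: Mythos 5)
Your proof is correct, but it takes a route the paper explicitly sets out to avoid. The paper states in the introduction that, unlike Gr\"onwall, it does \emph{not} use the multiplicativity of $\sigma_\alpha$; your argument is built entirely on that multiplicativity, via the Euler-product factorization $G_\alpha(n)=\prod_{p\le m}\tfrac{1-p^{-(a_p+1)\alpha}}{1-p^{-\alpha}}$ followed by the elementary estimate $|R_\alpha(n)-1|\le\sum_{p\le m}p^{-(a_p+1)\alpha}$ and the split at $\sqrt m$. That estimate and the split are both correct (for $p\le\sqrt m$ one has $a_p\ge\lfloor m/p\rfloor\ge\lfloor\sqrt m\rfloor$, and for $\sqrt m<p\le m$ the bound $p^{-2\alpha}<m^{-\alpha}$ with $\pi(m)\le m$ suffices), so you do reach both limits directly and even pick up Gr\"onwall's $\limsup$ statement \eqref{Gronwall} as a corollary. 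The paper instead proves the theorem by approximating $\sigma_\alpha$ pointwise with the $C^\infty$ functions $\CC_{\alpha,n}$ of \S\ref{ADF}, rewriting $\CC_{\alpha,n}(2x;x)$ as a limit of integrals against $\sqrt{N\pi}\cos^{2N}(\pi t)$ via Theorem \ref{T-approx-cos}, and then specializing to $x=m!$ so that $m!/k\in\N$ for all $k\le m$, which after a second application of Theorem \ref{T-approx-cos} yields the lower bound $G_\alpha(m!)\ge\sum_{k\le m}k^{-\alpha}+\Delta_\alpha(m)$ with $\Delta_\alpha(m)\to 0$; combined with the a priori upper bound $G_\alpha\le\zeta(\alpha)$ this gives the result. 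Your version is shorter and more elementary --- essentially a streamlined variant of Gr\"onwall's original argument with $m!$ and $[1,\ldots,m]$ in place of $P_m$ --- while the paper's version trades length for novelty: its purpose is precisely to demonstrate that the delta-like sequence machinery of Theorems \ref{T-approx-cos} and \ref{P-delta-like} can reprove the asymptotic without invoking multiplicativity at all. So your proposal is sound mathematics, but it bypasses the one thing the paper is trying to showcase.
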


The proof of this theorem is arguably  more involved than Gr\"onwall's one.
However, a novelty of our approach lies in the fact that, unlike Gr\"onwall, we do not use the multiplicativity of $\sigma_{\alpha}$, i.e., $\sigma_{\alpha}(mn)=\sigma_{\alpha}(m)\sigma_{\alpha}(n)$ for any coprime $m,n$ \cite{HW}, but we  exploit the properties of a sequence of  $C^\infty$ functions whose pointwise limit is $\sigma_{\alpha}$ (see \S\ref{ADF}).    

Our main tool is the following theorem, which, to the best of our knowledge, is new in the literature. 
\begin{Thm}\label{T-approx-cos} 
	Let $\eta_1, \eta_2\in\R\setminus\Z$ or $\eta_1, \eta_2\in\Z$, with $\eta_1 \leq \eta_2$.
	For every   $f \in L^1[\eta_1, \eta_2 ] $ which   is  continuous in a neighborhood of the integers,  we have 
	\begin{equation}\label{e11general}
		\lim_n\sqrt{n\pi} \int_{ \eta_1 }^{ \eta_2 } \cos^{2n}(\pi x) f(x){\rm d}x=
		\dsize\sum_{m=\lfloor \eta_1\rfloor}^{\lfloor \eta_2\rfloor}  f(m)     -
		\frac{f( \eta_1)\chi_{_\Z}( \eta_1)+f( \eta_2)\chi_{_\Z}( \eta_2)}{2},
	\end{equation} 
where  $\lfloor \eta\rfloor $ denotes the integer part of $\eta\in\R$ and $\chi_{_\Z}$ is the characteristic function of $\Z$.
\end{Thm}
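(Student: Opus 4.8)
The idea is to view the family $\{\sqrt{n\pi}\,\cos^{2n}(\pi x)\}_{n\ge1}$, one period at a time, as a delta-like sequence concentrating at the integers: as $n\to\infty$ its mass abandons every compact set disjoint from $\Z$ and accumulates in arbitrarily small neighborhoods of the integers, with total mass $1$ around each integer sitting in the interior of $[\eta_1,\eta_2]$ and mass $\tfrac12$ around a one-sided neighborhood of an integral endpoint. Granting this, the limit in \eqref{e11general} is forced to equal the sum of the values $f(m)$ over the integers $m\in[\eta_1,\eta_2]$, with the contribution of $\eta_1$ or $\eta_2$ halved precisely when that endpoint is an integer.

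First I would pin down the normalizing constant. Since $\cos^{2n}(\pi x)$ has period $1$ and is symmetric about every integer, Wallis' integral gives $\int_{-1/2}^{1/2}\cos^{2n}(\pi x)\,{\rm d}x=4^{-n}\binom{2n}{n}$, and Stirling's formula for the central binomial coefficient yields $4^{-n}\binom{2n}{n}=\bigl(1+o(1)\bigr)(\pi n)^{-1/2}$; the same estimate comes from Laplace's method, $\cos^{2n}(\pi x)$ resembling the Gaussian $e^{-\pi^2 n x^2}$ near $x=0$. Hence
$$
\sqrt{n\pi}\int_{-1/2}^{1/2}\cos^{2n}(\pi x)\,{\rm d}x\longrightarrow 1,\qquad \sqrt{n\pi}\int_{0}^{1/2}\cos^{2n}(\pi x)\,{\rm d}x\longrightarrow \tfrac12,
$$
the second by symmetry. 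Moreover, for every fixed $\delta\in(0,\tfrac12)$ one has $|\cos(\pi x)|\le\cos(\pi\delta)<1$ on $[\delta,\tfrac12]$, whence $\sqrt{n\pi}\int_{\delta}^{1/2}\cos^{2n}(\pi x)\,{\rm d}x\to0$ faster than any polynomial. Combining, the mass of $\sqrt{n\pi}\cos^{2n}(\pi x)$ on $(m-\delta,m+\delta)$ tends to $1$ for any $m\in\Z$, and on $[m,m+\delta)$ it tends to $\tfrac12$.

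Now fix $\varepsilon>0$ and choose $\delta\in(0,\tfrac12)$ small enough that, in addition, $\delta<\mathrm{dist}(\eta_i,\Z)$ whenever $\eta_i\notin\Z$ and $|f(x)-f(m)|<\varepsilon$ on $(m-\delta,m+\delta)\cap[\eta_1,\eta_2]$ for every integer $m\in[\eta_1,\eta_2]$; this is legitimate because there are only finitely many such $m$ and $f$ is continuous near each of them. Decompose $[\eta_1,\eta_2]$ into the finitely many arcs $J_m:=[\eta_1,\eta_2]\cap(m-\delta,m+\delta)$ and the remaining set $K$. On $K$ the cosine is confined to $[-\rho,\rho]$ for some $\rho=\rho(\delta)<1$, so
$$
\Bigl|\sqrt{n\pi}\int_K\cos^{2n}(\pi x)f(x)\,{\rm d}x\Bigr|\le\sqrt{n\pi}\,\rho^{2n}\,\|f\|_{L^1[\eta_1,\eta_2]}\longrightarrow 0,
$$
and this is the one place where only $f\in L^1$ is used, with no regularity of $f$ away from the integers. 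On each $J_m$ write $f=f(m)+\bigl(f-f(m)\bigr)$: the $f(m)$ part contributes $f(m)\,\sqrt{n\pi}\int_{J_m}\cos^{2n}(\pi x)\,{\rm d}x$, which by the previous paragraph tends to $f(m)$ when $m\in(\eta_1,\eta_2)$ and to $f(m)/2$ when $m=\eta_1$ or $m=\eta_2$, while the remaining part is at most $\varepsilon\,\sqrt{n\pi}\int_{J_m}\cos^{2n}(\pi x)\,{\rm d}x$ in absolute value, so its $\limsup$ in $n$ is $\le\varepsilon$. Summing the finitely many arcs, letting $n\to\infty$ and then $\varepsilon\to0$, leaves exactly the right-hand side of \eqref{e11general}.

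The scheme is routine once these structural ingredients are assembled, so I expect the only genuine work to lie in two places: (i) calibrating the gauge $\sqrt{n\pi}$, i.e.\ the asymptotic $4^{-n}\binom{2n}{n}\sim(\pi n)^{-1/2}$, to which the whole statement is tuned; and (ii) the endpoint accounting recorded by the $\chi_{_\Z}$ terms, which is also the point where the hypothesis that $\eta_1,\eta_2$ be integral or non-integral together makes the one-sided contributions at the two ends line up cleanly. Neither point is deep, but both want care.
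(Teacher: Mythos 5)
Your proof is correct and arrives at the right limit, but by a genuinely different route from the paper's. The paper proves an abstract Proposition~\ref{P-delta-like} for delta-like sequences, verifies in Lemma~\ref{L-delta-seq} that $\varphi_n(x)=\sqrt{n\pi}\cos^{2n}(\pi x)$, restricted to one period, satisfies its hypotheses---computing the mass as $\sqrt{n/\pi}\,\beta(\tfrac12,n+\tfrac12)=\Gamma(n+\tfrac12)\sqrt n/\Gamma(n+1)\to 1$ via the beta function and Stirling---and then decomposes $[\eta_1,\eta_2]$ into consecutive unit intervals $[m-\eta,m+1-\eta]$, applying the lemma once per interval (and a separate version of the lemma on $[0,1]$, giving half-mass at each endpoint, when $\eta=0$). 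You instead partition $[\eta_1,\eta_2]$ into short arcs $J_m$ centered at the integers and a complement $K$, dispatch $K$ directly via $\sqrt{n\pi}\,\rho^{2n}\|f\|_{L^1}\to 0$ (the one place $L^1$ is used), and on each $J_m$ linearize $f=f(m)+(f-f(m))$ against the one-period mass $4^{-n}\binom{2n}{n}\sim(\pi n)^{-1/2}$, which is the same asymptotic as the paper's gamma ratio, merely phrased through Wallis' integral and the central binomial coefficient. Your route is more self-contained and makes the endpoint halving transparent, since a one-sided neighborhood of an integral endpoint visibly carries half the mass; the paper's unit-interval split is more modular, isolating a reusable single-period lemma, but pays by needing a second statement for the $\eta=0$ case. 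One minor remark, inherited from the theorem's display rather than from your argument: both your proof and the paper's construction produce $\sum_{m\in[\eta_1,\eta_2]\cap\Z}f(m)$, i.e.\ lower summation index $\lceil\eta_1\rceil$, whereas the written $\lfloor\eta_1\rfloor$ would pull in $f(\lfloor\eta_1\rfloor)$, which lies outside the domain of $f$ whenever $\eta_1\notin\Z$.
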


 Our hope is also that this alternative approach might provide   new insights and lead to some new result in the future.

 \medskip\noindent
  {\bf Acknowledgements.}
 The  approximation of  the divisor functions with   the sequences of $C^\infty$ functions presented in this paper   is  an original idea  of the first Author of this paper and the core of his Master thesis at Florida International University.


\subsection{Notation} For brevity, we write 
$\displaystyle\lim_n$ instead of  $\displaystyle\lim_{n\in\N\atop n\to\infty}$, and the symbol $\{\cdot\}_{n\in\N}$ for a sequence  is abbreviated as $\{\cdot\}_n$.
The power $\left(\cos(y)\right)^\beta$ is written as
$\cos^\beta(y)$, and the same we do for the sine function. 

Given a real number $x\ge 1$,   in sums like $\displaystyle \sum_{k=1}^{x}$ we mean that $k\in\N$ with $k\le \lfloor x\rfloor$, where $\lfloor x\rfloor $ denotes 
the integer part of $x\in\R$, i.e.,
the greatest integer such that $x- \lfloor x\rfloor \ge 0$. 

The characteristic function of a set $A\subseteq \R$ is denoted by $\chi_{_A}$.

\section{Approximation of the divisor function}\label{ADF}

In this section a sequence of   functions that approximate  $  \sigma_{\alpha}$  
is defined.  To this end,
we consider $\sigma_{\alpha}$   for real    $\alpha>0$, and 
we extend such a function to   $[0, \infty)$ by letting $\sigma_{\alpha} (x)=0$ when $x\in [0, \infty)\setminus\N$.

For any given real numbers $\alpha> 0$ and $M>1$,  let us consider the sequence $\{\CC_{\alpha,  n}\}_n\subset C^\infty[0, M]$  
defined as

\begin{equation}\label{def-cos2}   
	 \CC_{\alpha,n}(M;x) =  \sum_{ k=0} ^M f_n(k),
 \end{equation}   
where
$$
f_n(t)=f_{\alpha,n}(x;t):=\begin{cases} 0   & \mbox{ if $t=0$ or $x=0$,}\cr t^\alpha  \cos^{2n}\big(\frac{\pi x}{t} \big)
	& \mbox{ otherwise.} \end{cases}
$$

The main property of the functions  $\eqref{def-cos2}$  is  given by the following proposition.

\begin{Prop}\label{T-cos-approx} For any given real numbers $\alpha> 0$, $M>1$
	and  $x\in [0, M]$,     the   sequence  $n\to \CC_{\alpha,  n}(M;x)$   is  non-increasing,  and  $\dsize \lim_n \CC_{\alpha,  n}(M;x)=  \sigma_\alpha(x) .$ 
\end{Prop}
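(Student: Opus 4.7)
The strategy is to pass the limit inside the finite sum defining $\CC_{\alpha,n}(M;x)$ and reduce the statement to the elementary behavior of $\cos^{2n}(\theta)$ as $n\to\infty$.

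\emph{Monotonicity.} The case $x=0$ is trivial because, by definition, $f_n(k)\equiv 0$, so $\CC_{\alpha,n}(M;0)=0$ for every $n$. For $x>0$ the sum reads $\CC_{\alpha,n}(M;x)=\sum_{k=1}^{\lfloor M\rfloor}k^{\alpha}\cos^{2n}(\pi x/k)$. Since $\cos^{2}(\pi x/k)\in[0,1]$, each term $k^{\alpha}\cos^{2n}(\pi x/k)$ is a non-increasing function of $n$, and the finite sum inherits this property. This disposes of the first half of the proposition without further work.

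\emph{Pointwise limit.} The key elementary fact is that $\lim_{n}\cos^{2n}(\theta)=1$ when $\theta/\pi\in\Z$ and $0$ otherwise, because $|\cos\theta|=1$ precisely at integer multiples of $\pi$ and $|\cos\theta|<1$ elsewhere. I would apply this to $\theta=\pi x/k$ for each $k\in\{1,\ldots,\lfloor M\rfloor\}$: the terms that survive in the limit are exactly those for which $x/k\in\Z$. If $x\notin\N$, no such $k$ exists, whence the limit equals $0$, which coincides with $\sigma_{\alpha}(x)$ under the convention extending $\sigma_{\alpha}$ to $[0,\infty)\setminus\N$ by $0$. If instead $x\in\N$ with $x\le M$, the surviving indices are precisely the divisors of $x$; summing their contributions yields $\sum_{k\mid x}k^{\alpha}=\sigma_{\alpha}(x)$, as required. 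Since the sum is finite, the interchange of limit and summation needs no justification beyond pointwise convergence.

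\emph{Main obstacle.} There is no serious analytic obstruction here: the finiteness of the sum makes the limit computation essentially bookkeeping. The only point requiring care is verifying that when $x\in\N\cap[0,M]$ every divisor of $x$ indeed lies in the index range $\{1,\ldots,\lfloor M\rfloor\}$ of the sum, so that no divisor is lost. This is immediate from $k\mid x \Rightarrow k\le x\le M$ together with $k\in\N$, which gives $k\le\lfloor M\rfloor$. Once this is noted, the two assertions of Proposition~\ref{T-cos-approx} follow at once from the termwise analysis above.
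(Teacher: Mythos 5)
Your proposal is correct and follows essentially the same route as the paper: both arguments analyze each term $k^{\alpha}\cos^{2n}(\pi x/k)$ of the finite sum separately, noting that $\cos^{2n}(\pi x/k)$ is non-increasing in $n$ and tends to $1$ exactly when $x\in\N$ and $k\mid x$, and to $0$ otherwise. Your explicit check that every divisor $k$ of $x\le M$ lies in the summation range is a minor bookkeeping point the paper leaves implicit, but the substance is identical.
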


\begin{proof} If $x=0$, then this is obviously true. Let $x\in (0,M]$ and $k \in (0, M]\cap\N$  be fixed.
	If $x\not\in\N$ or if $x\in\N$ is not divisible by  $k$,  then  $|\cos(\pi x/k) |<1$.
	In this case,  the sequence $n\to \cos^{2n }(\pi x/k)$  is decreasing  and $\dsize\lim_n\cos^{2n } (\pi x/k)=0$.
	On the other hand,
	$\cos^{2n }(\pi x/k)=1$ for all $n\in\N$  if and only if  $x\in\N$ and $k$ divides $x$. Consequently, 
	$$
	\lim_n \CC_{\alpha,  n}(M;x)=
	\begin{cases} 
		\dsize\sum_{k\mid x }k^{\alpha} &  \mbox{if \ $x\in(0,M]\cap\N$,}\cr
		0& \mbox{ otherwise  },	 
	\end{cases}
	$$	 
	as claimed.
\end{proof}

  \section{Delta-like sequences   and proof of Theorem \ref{T-approx-cos}}

In the literature (see e.g. \cite{R}, \cite{B})
a sequence of non-negative functions $\{g_n\} _{n\in\N}\subset L^1(\R^m)$  is known to be a
{\it delta-like sequence} if
\begin{equation}\label{approxid}  
\lim_{n }\int_{\R^m} g_n(x)f(x){\rm d}x=f(0)
\end{equation}  
for all $f\in C^\infty_0(\R^m)$.
If, in addition, the functions $g_n$ are smooth, they are also called {\it mollifiers}, or {\it approximations of the identity}.

A well-known set of delta-like sequences is made of   non-negative functions  $g_n\in L^1(\R^m)$   satisfying  the properties (a) and (b) below.

\smallskip
(a) For every $\epsilon>0$, there exists  $\delta >0$ such that
$\dsize\int_{|x|>\delta}g_n(x){\rm d}x <\epsilon$.     
  
 (b)  
 $\dsize\int_{\R^m} g_n(x){\rm d}x=1$.
 
 \medskip
Here and in what follows, we have let  $|x|=\sqrt{x_1^2+... +x_m^2}$.
  
  We prove the following 
 \begin{Prop} \label{P-delta-like}
 If $\{g_n\} _{n\in\N}\subset L^1(\R^m)$, where each $g_n$ is non-negative, satisfies the previous property (a), and 
  \smallskip
  
 (b') $\dsize\lim_n  \int_{\R^m} g_n(x){\rm d}x=1$,
   
 then
 \eqref{approxid} holds for every $f\in L^1(\R^m)\cap L^\infty(\R^m)$ which is continuous in a neighborhood of $x=0$.
 	\end{Prop}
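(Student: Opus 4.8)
\textbf{Proof plan for Proposition \ref{P-delta-like}.}

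The plan is to reduce the general case to the classical situation by exploiting the single property that distinguishes (b') from (b), namely that the total masses $I_n:=\int_{\R^m}g_n(x)\,{\rm d}x$ need not equal $1$ but only tend to $1$. First I would fix $f\in L^1(\R^m)\cap L^\infty(\R^m)$ continuous in a neighborhood $|x|<\rho$ of the origin, and write the basic decomposition
\begin{equation}\label{plan-decomp}
\int_{\R^m}g_n(x)f(x)\,{\rm d}x - f(0)
= \int_{\R^m}g_n(x)\big(f(x)-f(0)\big)\,{\rm d}x + f(0)\big(I_n-1\big).
\end{equation}
The second term tends to $0$ by hypothesis (b'), so everything comes down to showing that $\int_{\R^m}g_n(x)(f(x)-f(0))\,{\rm d}x\to 0$.

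Next I would split the remaining integral over $\{|x|\le\delta\}$ and $\{|x|>\delta\}$ for a parameter $\delta\in(0,\rho)$ to be chosen. On the inner region, continuity of $f$ at $0$ gives $|f(x)-f(0)|<\epsilon$ once $\delta$ is small enough; since $g_n\ge 0$, the inner contribution is bounded by $\epsilon\, I_n$, and $I_n$ is bounded (being convergent), so this piece is $O(\epsilon)$ uniformly in $n$. On the outer region I would use $|f(x)-f(0)|\le \|f\|_{L^\infty}+|f(0)|\le 2\|f\|_{L^\infty}$ together with property (a): given $\epsilon$, choose $\delta$ (shrinking it further if necessary so that also $\delta<\rho$) so that $\int_{|x|>\delta}g_n(x)\,{\rm d}x<\epsilon$ for all $n$; then the outer contribution is at most $2\|f\|_{L^\infty}\,\epsilon$. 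Combining, $\big|\int_{\R^m}g_n(x)(f(x)-f(0))\,{\rm d}x\big|\le (2\|f\|_{L^\infty}+\sup_n I_n)\,\epsilon$ for all large $n$, and letting $\epsilon\to 0$ finishes the argument.

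The only genuinely delicate point is a bookkeeping one: property (a) as stated furnishes, for each $\epsilon$, a $\delta$ that controls the tails of \emph{all} the $g_n$ simultaneously, so the outer estimate is automatically uniform in $n$; one just has to be careful to pick a single $\delta$ that works both for (a) and for the continuity of $f$ (i.e.\ take the minimum of the two radii), and to note that $f\in L^\infty$ is exactly what makes the outer integrand bounded despite $f$ being merely $L^1$ away from the origin. I expect the main obstacle to be purely expository — making sure the order of choosing $\epsilon$, then $\delta$, then $n$ large is presented cleanly — rather than mathematical; no use of $f\in C^\infty_0$ or of smoothness of the $g_n$ is needed, which is the whole point of the proposition.
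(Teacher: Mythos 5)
Your proposal follows essentially the same route as the paper: both reduce to showing $\int_{\R^m} g_n(x)\bigl(f(x)-f(0)\bigr)\,\mathrm{d}x\to 0$ by splitting at a radius $\delta$, bounding the inner piece by $\epsilon$ times the (eventually near-$1$) mass $I_n$ via continuity, bounding the outer piece by $2\|f\|_\infty$ times the tail mass via (a), and then recovering the conclusion from (b'); your rearrangement $\int g_n f - f(0)=\int g_n(f-f(0))+f(0)(I_n-1)$ is a cosmetic variant of the paper's final algebraic step. One small caveat: your parenthetical ``shrinking $\delta$ further if necessary so that also $\delta<\rho$'' does not preserve the tail bound (shrinking $\delta$ enlarges $\{|x|>\delta\}$), so the reconciliation of the $\delta$ from (a) with the $\delta$ from continuity still needs care — though the paper's own write-up is equally informal on exactly this point.
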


 \begin{proof}  From (b') it follows that for every fixed $0 <\epsilon<1$ there exists $N>0$ such that  
 	$\dsize 1-\epsilon< \int_{\R^m} g_n(x){\rm d}x < 1+\epsilon $ whenever $n\ge N$. The continuity of $f$  at $x=0$ yields    $|f(x)- f(0)|<\epsilon$ if $|x|<\delta$ 
 	for some  $\delta>0$.  
 Thus, by using (a) we see that  if $n\ge N$, then
\begin{align*}
  \int_{\R^m} g_n(x)\big(f(x)- f(0)\big){\rm d}x  &=  \int_{|x|<\delta}  g_n(x)\big(f(x)- f(0)\big){\rm d}x+\int_{|x|>\delta}  g_n(x)\big(f(x)- f(0)\big){\rm d}x 
  \\ & <    \epsilon \int_{|x|<\delta}  g_n(x){\rm d}x + 2||f||_\infty \int_{|x|>\delta}  g_n(x){\rm d}x
   \\ & <   \epsilon  (1+\epsilon) + 2||f||_\infty \epsilon.
  \end{align*}
We also  have that
\begin{align*}
	\int_{\R^m} g_n(x)\big(f(x)- f(0)\big){\rm d}x  & \ge  -\int_{|x|<\delta}  g_n(x) | f(x)- f(0)|{\rm d}x - \int_{|x|<\delta}  g_n(x) | f(x)- f(0)|{\rm d}x
	\\ & >   -\epsilon \int_{|x|<\delta}  g_n(x){\rm d}x  -2||f||_\infty\int_{|x|>\delta}  g_n(x){\rm d}x
	\\ & >-  \epsilon  (1+\epsilon) -2||f||_\infty\epsilon.  
\end{align*}

   Since $\epsilon$ is arbitrary,  it follows that \begin{align*}
   0=\lim_{n }\int_{\R}   g_n(x) ( f(x)-f(0)){\rm d}x &=\lim_{n }\int_{\R}  g_n(x) f(x)  {\rm d}x-f(0)\lim_{n }\int_{\R}g_n(x) {\rm d}x\\ &= \lim_{n }\int_{\R}  g_n(x) f(x)  {\rm d}x-f(0),
\end{align*}
which yields \eqref{approxid}.
\end{proof}

\subsection{ Proof of Theorem \ref{T-approx-cos}}
 We need the following 
    
\begin{Lemma} \label{L-delta-seq} Let $ \varphi_n(x):= \sqrt{n\pi}\cos^{2n}(\pi x) 
	$ and $\eta\in(0,1)$.
	If $f\in L^1 (-\eta, 1-\eta )$ is continuous at $x=0$, then
	\begin{equation}\label{e-limit-1}
		\lim_{n} \int_{-\eta}^{1- \eta}f(x) \varphi_n(x){\rm d}x= f(0).
	\end{equation} 
	If $f\in L^1 [0,1]$ is continuous at $x\in\{0,1\}$, then
	\begin{equation}\label{e-limit-2}
		\lim_{n} \int_{0}^{1 }f(x) \varphi_n(x){\rm d}x= \frac{f(0)+f(1)}{2}.
	\end{equation}
\end{Lemma}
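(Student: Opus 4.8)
The plan is to recognize $\varphi_n$ as a delta-like sequence in the sense of \S 3 and apply Proposition \ref{P-delta-like}, so the first task is to verify the hypotheses of that proposition for $g_n=\varphi_n$ restricted to a single period. The nonnegativity is clear since $\cos^{2n}$ is a square. For the mass condition, I would compute $\int_{-\eta}^{1-\eta}\varphi_n(x)\,{\rm d}x$; by periodicity of $\cos^{2n}(\pi x)$ this equals $\int_0^1\sqrt{n\pi}\cos^{2n}(\pi x)\,{\rm d}x=\sqrt{n\pi}\cdot\frac{1}{\pi}\int_0^\pi\cos^{2n}u\,{\rm d}u$, and the Wallis integral $\int_0^\pi\cos^{2n}u\,{\rm d}u=\pi\binom{2n}{n}4^{-n}$ gives mass $\sqrt{n\pi}\,\binom{2n}{n}4^{-n}$, which tends to $1$ by Stirling's formula ($\binom{2n}{n}4^{-n}\sim 1/\sqrt{\pi n}$). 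So condition (b') holds exactly in the limit, which is precisely why Proposition \ref{P-delta-like} was stated with (b') rather than (b). For concentration property (a): on the interval $(-\eta,1-\eta)$ the function $|\cos(\pi x)|$ attains its maximum $1$ only at $x=0$ and is bounded by some $\rho=\rho(\delta)<1$ on $\{\delta<|x|\}\cap(-\eta,1-\eta)$; hence $\int_{\{|x|>\delta\}}\varphi_n\le \sqrt{n\pi}\,\rho^{2n}\to 0$. This establishes \eqref{e-limit-1}.

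For \eqref{e-limit-2}, the interval $[0,1]$ contains \emph{two} points where $|\cos(\pi x)|=1$, namely $x=0$ and $x=1$, so the mass concentrates equally at both endpoints, each endpoint contributing half because only a one-sided neighborhood of each lies inside $[0,1]$. The cleanest way I would organize this is to split $[0,1]=[0,\tfrac12]\cup[\tfrac12,1]$ and handle each half by a shifted/reflected version of the first part. On $[0,\tfrac12]$, write the integral as a limit using the substitution that embeds $[0,\tfrac12]$ into a full-period neighborhood of $0$; more directly, extend $f$ by $0$ to $(-\tfrac12,\tfrac12)$ — note the extension is still $L^1$ and continuous at $0$ from the relevant side — and apply the one-point result to get $\tfrac{f(0)}{2}$, the factor $\tfrac12$ appearing because half the concentrating mass sits in $x<0$ where the extended $f$ vanishes. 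Symmetrically, on $[\tfrac12,1]$ use the reflection $x\mapsto 1-x$, under which $\cos^{2n}(\pi x)=\cos^{2n}(\pi(1-x))$, to reduce to a neighborhood of $0$ and obtain $\tfrac{f(1)}{2}$. Adding the two halves yields $\tfrac{f(0)+f(1)}{2}$.

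The main obstacle is the bookkeeping around the one-sided neighborhoods and the exact split of mass: one must be careful that when $f$ is only continuous at $0$ from the right (as an element of $L^1[0,1]$), extending by zero to the left does not create a continuity problem at $0$ — it does not, since $\varphi_n$ is even, so the symmetric contributions from $x<0$ and $x>0$ are genuinely equal and the zero-extension only affects the $x<0$ side, which contributes $0\cdot(\text{half the mass})$. A secondary technical point is making the asymptotic mass computation fully rigorous: I would isolate the fact $\lim_n\sqrt{n\pi}\,\binom{2n}{n}4^{-n}=1$ as a one-line consequence of Stirling, and likewise record that for any fixed $a\in(0,1)$ one has $\lim_n\sqrt{n\pi}\int_a^{1-a}\cos^{2n}(\pi x)\,{\rm d}x=0$, which is the quantitative form of property (a) needed to discard the overlap region near $x=\tfrac12$ when splitting. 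Once these two elementary estimates and Proposition \ref{P-delta-like} are in hand, both \eqref{e-limit-1} and \eqref{e-limit-2} follow without further difficulty.
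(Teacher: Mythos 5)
Your proof of \eqref{e-limit-1} follows the paper's route almost exactly: verify (a) by uniform decay of $\varphi_n$ away from $0$, verify (b') by computing the Wallis integral and invoking Stirling, then quote Proposition~\ref{P-delta-like}. The paper phrases the mass computation via the Beta function, $\sqrt{n/\pi}\,\beta(\tfrac12,n+\tfrac12)=\Gamma(n+\tfrac12)\sqrt n/\Gamma(n+1)\to 1$, which is the same asymptotic as your $\sqrt{n\pi}\binom{2n}{n}4^{-n}\to 1$.

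For \eqref{e-limit-2} your plan has the right geometry but one step is phrased in a way that does not quite close. You propose to extend $f$ from $[0,\tfrac12]$ to $(-\tfrac12,\tfrac12)$ by zero and then ``apply the one-point result to get $f(0)/2$.'' But Proposition~\ref{P-delta-like} (and hence \eqref{e-limit-1}) requires continuity at $0$, and the zero-extension is \emph{not} continuous at $0$ unless $f(0)=0$; applying the proposition verbatim to it would give $f(0)$, not $f(0)/2$. You recognize this and supply the correct intuition — the even symmetry of $\varphi_n$ splits the concentrating mass equally between $x<0$ and $x>0$, and the extension vanishes on $x<0$ — but that is a new argument about one-sided concentration, not a direct invocation of the proposition, and as written it would need a separate lemma to be rigorous. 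The paper sidesteps the issue entirely: instead of extending by zero it uses the reflections $x\mapsto f(|x|)$ and $x\mapsto f(1-|x|)$, both of which \emph{are} continuous at $0$, obtains the identity
\[
2\int_0^1 f(x)\varphi_n(x)\,{\rm d}x=\int_{-1/2}^{1/2} f(|x|)\varphi_n(x)\,{\rm d}x+\int_{-1/2}^{1/2} f(1-|x|)\varphi_n(x)\,{\rm d}x
\]
from $\varphi_n(x)=\varphi_n(-x)=\varphi_n(1-x)$, and then applies \eqref{e-limit-1} directly to each term. This gives $f(0)+f(1)$ on the right without any one-sided mass accounting. I would recommend replacing your zero-extension step by this reflection identity; it yields the factor $\tfrac12$ automatically and keeps you entirely within the hypotheses of Proposition~\ref{P-delta-like}.
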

 
 \begin{proof}
 We first show that the  function $\varphi_n  
 	\chi_{_{(-\eta, 1-\eta )}}(x)$  satisfies the properties (a) and (b')  in Proposition \ref{P-delta-like}. 
 	%
Without loss of generality we   assume   $\eta =1/2$ and for simplicity the function $\varphi_n  
\chi_{_{(-1/2, 1/2 )}}$ will still be denoted by $\varphi_n$.
  	
 In order to prove  (a), we show that the sequence $\{\varphi_n\}_n$ converges uniformly to zero in any compact subset of $\R\setminus\{0\}$, i.e.,    $\lim_{n}\sup_{|x| >\delta}\varphi_n(x)=0 $
 	for every $ \delta >0$.
 	Since the $\varphi_n$ are even  and  decreasing when $x>0$, it suffices to show that  $\lim_{n}\varphi_n(\delta)=0$ for every $0<\delta <1/2$. 
 	By    the elementary   inequality $ \sin( t) \ge  2t/\pi  $  for all $t \in [0, \,\pi/2]$, we have 
 	$$
 	\lim_{n}\varphi_n(\delta)=\lim_{n}\sqrt{n\pi}  \cos^{2n}(\pi \delta)  = 
 	\lim_{n}   \sqrt{n\pi}\big(1-\sin^2(\pi \delta)\big)^n \leq \lim_{n} \sqrt{n\pi}(1- 4 \delta^2)^{n}  =0. 
 	$$
 	Hence, (a) is proved.
 	
 \medskip	
 We now prove (b').
  To this end, first let us
recall the beta function \cite[Formulae  8.380\, 2, 8.384\, 1]{GR}
 	$$
 	\beta(a,b)= \frac{\Gamma(a)\Gamma(b )}{\Gamma(a+b)}:=2\int_{0}^{\frac \pi2} \sin^{2a-1}(t)\cos^{2b-1}(t){\rm d}t,\quad \Re{a},\Re{b}\in(0,+\infty),
 	$$  
 	where $\Gamma(z);=\int_0^\infty e^{-t}t^{z-1}{\rm d}t$, $\Re{z}>0$, is the well-known gamma function \cite[Formula 8.310\, 1]{GR}. Since $\Gamma(1/2)=\sqrt\pi$ \cite[Formula 8.338\, 2]{GR},
 	we can write 
 	\begin{align*}
 		\int_{-\frac 12}^{\frac 12} \varphi_n(x){\rm d}x=&\ 2\,\sqrt{n\pi} \int_{0}^{\frac 12 } \cos^{2n}(\pi x){\rm d}x=   2\,\sqrt{\frac      n\pi} \int_{0}^{\frac \pi 2} \cos^{2n}(t){\rm d}t\nonumber\\
 		=&\
 		\sqrt{\frac      n\pi}\, \beta\mbox{$(\frac 12 ,\, n+\frac 12)$}=
 		\frac{   \Gamma \left(n+\frac{1}{2}\right)}{   \Gamma
 			(n+1)}\sqrt{n}. \label{e-int-cos-n}
 	\end{align*} 
 	Hence, (b')  follows after observing that
 	by Stirling's approximation formula we get
 	\cite[Formulae 8.339\, 1, 2]{GR}
 	$$
 	\lim_n \frac{ \Gamma(n+\frac 12)}{ \Gamma(n+1)}\sqrt n=\sqrt\pi\lim_n \frac{ (2n-1)!!}{ n!2^n}\sqrt n=
 	1.
 	$$ 
 	Now, since   $f\in L^1[-\frac 12,\frac 12]$  is continuous at $x=0$, from Proposition \ref{P-delta-like} we conclude that 
 	\begin{equation}\label{e-lim} \lim_n  \int_{-\frac 12 }^{\frac 12 } \varphi_n(x)  f(x){\rm d}x= \lim_n\sqrt{n\pi} \int_{-\frac 12 }^{\frac 12 } \cos^{2n}(\pi x) f(x){\rm d}x=f(0).\end{equation}     
  Thus, \eqref{e-limit-1} is proved. 
  
  \medskip
  It remains to prove \eqref{e-limit-2}.
  Since $f\in L^1[0,1]$ is continuous at $x\in\{0,1\}$,
 	the functions  $x\to f(|x|)$ and  $x\to f(1-|x|)$  are continuous at  $x=0$. 
 Further,
 	$\varphi_n(x)= \varphi_n(1-x)$   and $\varphi_n(x)= \varphi_n(-x)$. Thus, we  can write
 	\begin{align*}
 		\int_{0}^{1 }f(x) \varphi_n(x){\rm d}x =& \int_{0}^{\frac 12} f(x)  \varphi_n(x){\rm d}x\, +  
 		\int_ {\frac 12}^1 f(x) \varphi_n(x){\rm d}x
 		\\
 		=& \int_{0}^{\frac 12} f(x)  \varphi_n(x){\rm d}x\, +  
 		\int_ {0}^{\frac 12} f(1-x) \varphi_n(x){\rm d}x
 		\\
 		=&
 		\int_{-\frac 12}^{0} f(-x)  \varphi_n(x){\rm d}x\, +  
 		\int_ {-\frac 12}^0 f(1+x) \varphi_n(x){\rm d}x
 	\end{align*}
 	From the above chain of identities  it follows that 
 	$$2\int_{0}^{1 }f(x) \varphi_n(x){\rm d}x=
 	\int_{-\frac 12}^{\frac 12} f(|x|)  \varphi_n(x){\rm d}x\, +  
 	\int_ {-\frac 12}^{\frac 12} f(1-|x|) \varphi_n(x){\rm d}x.
 	$$
 	Hence,  \eqref{e-lim} yields    
 	$\dsize 2\lim_n \int_{0}^{1 }f(x) \varphi_n(x){\rm d}x =f(0)+f(1),
 	$ 
 as required.
 \end{proof}
 
 \begin{proof}[Proof of  Theorem \ref{T-approx-cos}]  
 	We can assume   that $ [\eta_1, \eta_2]= [a-\eta, b- \eta]$, where $a\leq b$ are integers and $0\le \eta<1$. This is obvious if $\eta_1, \eta_2\in\Z$. Therefore, let us assume that $\eta_1, \eta_2\not\in\Z$, so that $\eta:= \lfloor  \eta_1+1 \rfloor-\eta_1\in(0,1)$.
 	If we let  $a= \lfloor  \eta_1+1 \rfloor$ and $b=\lfloor  \eta_2+2 \rfloor $, then $\eta_1= a-\eta$ and  $b-\eta=
 	\lfloor  \eta_2+2 \rfloor-\eta>\eta_2+1 -\eta>\eta_2$, i.e., $\eta_2=b-\eta-\eta'$ for some $\eta'>0$.
 	Consequently, 
 	 we can extend  $f\in L^1[\eta_1, \eta_2]=L^1[a-\eta, b-\eta-\eta']$  to $\tilde f\in L^1[a-\eta,  b-\eta ]$ by letting $\tilde f\equiv 0$  in $[b-\eta-\eta', \ b-\eta]$. Note that 
$\tilde f$ is  still continuous  in a neighborhood of the integers. Further,  
the sum  on the right-hand side of \eqref{e11general} does not change if $f$ is replaced by $\tilde f$. Thus, without loss of generality, we can assume that $f$  is defined in  the interval $[a-\eta,\  b-\eta]$ and write 
 \begin{align}\nonumber  \lim_n\sqrt{n\pi} \int_{a-\eta }^{b-\eta } \cos^{2n}(\pi x) f(x){\rm d}x &= \sum_{m=a}^{b-1} \lim_n\sqrt{n\pi}\int_{m-\eta }^{m+1-\eta } \cos^{2n}(\pi x) f(x){\rm d}x 
 	\\\label{arg1} &=
 	\sum_{m=a}^{b-1} \lim_n \int_{ -\eta}^{  1-\eta } \varphi_n(x)  f(m+x){\rm d}x,
 \end{align}
 where $\varphi_n$ has been introduced in Lemma \ref{L-delta-seq}. Indeed, by applying this lemma we have 
 $$
 \lim_n \int_{ -\eta}^{  1-\eta } \varphi_n(x)  f(m+x)=\begin{cases} f(m)   & \mbox{ if $\eta>0$,}\cr\cr \dsize\frac{f(m)+f(m+1)}{2}
	& \mbox{ if $\eta=0$.} \end{cases}
$$  
Consequently,    \eqref{arg1} becomes \eqref{e11general}.
\end{proof}
 
\section{Proof of Theorem \ref{T-Est-divisors}}\label{proof}

For $x,n\in\N$, with $x>1$, let us take $M=2x$ for the approximants  of $\sigma_\alpha(x)$  
	defined in  \eqref{def-cos2}, namely 
	$$
	\CC_{  \alpha, n}(2x;x)=\sum_{k=0}^{  2x} k^\alpha  \cos^{2n} \big(\frac {\pi x} {k} \big). 
	$$   
	 Recall that the function  $f_n(t):=  t^\alpha\cos^{2n}(\pi   x/ t)$ extends to  a continuous   function  in $ [0, \infty)$, with $f_n(0)=f_n(2x)=0$. Further, it satisfies the hypotheses of  Theorem \ref{T-approx-cos} in $[0,2x]$. Thus,
	\begin{align*} 
	\CC_{  \alpha, n}(2x;x)&=  \lim_N\ \sqrt {N\pi}\int_{0}^{  2x   }  f_n(t) \cos^{2N}(\pi t){\rm d}t +\frac{f_n(0)+f_n(2x)}{2}\\
	&=\lim_N\ \sqrt {N\pi}\int_{0}^{  2x   } f_n(t) \cos^{2N}(\pi t){\rm d}t.
\end{align*} 
	By the  change of variables $t\to    xt$ one has
	$$  
	\CC_{  \alpha, n}(2x;x) = x\lim_N  \sqrt {N\pi}\,  F_{n,N}(x),  
	$$
	where 
	$$ 
	F_{n,N}(x):= \int_{0}^{2 }  \!f_n(xt)\cos^{2N}(  \pi xt){\rm d t},    
	$$
	$$
	f_n(xt)=\begin{cases} 0   & \mbox{ if $t=0$}\cr (xt)^\alpha  \cos^{2n}\big(\frac \pi {  t} \big)
		& \mbox{ otherwise.} \end{cases}
	$$
	Thus, Theorem \ref{T-cos-approx} yields
	\begin{equation}\label{est-sigma-a1}\sigma_\alpha(x)=x\lim_n\lim_N \sqrt {N\pi}\, F_{n,N}(x)   ,\quad \forall x\in\N.  
	\end{equation}
Since it is plain that $f_n(xt)\ge 0$ for all $t\in[0,2]$, we
	can take a sufficiently large integer $m$, set $m'=4(m!)$, and write
	\begin{align*}
		F_{n,N}(x)  &\ge   \sum_{k=1}^{m}  \int_{\frac{1}{k}-\frac{1}{m'}}^{\frac{1}{k}+\frac{1}{m'}}  \!\!f_n(xt)\cos^{2N}(  \pi xt){\rm d}t\\	&\ge  \, x^\alpha\sum_{k=1}^{m}  \big(\frac{1}{k}-\frac{1}{m'}\big) ^\alpha   \int_{\frac{1}{k}-\frac{1}{m'}}^{\frac{1}{k}+\frac{1}{m'}}  \! 
		\cos^{2n}\big(\frac \pi {  t} \big) \cos^{2N}(  \pi xt){\rm d}t
		\\
		&=  x^{\alpha-1}\, \sum_{k=1}^{m} \big(\frac{1}{k}-\frac{1}{m'}\big)^\alpha  \int_{\frac{x}{k}-\frac{x}{m'}}^{\frac{x}{k}+\frac{x}{m'}} \cos^{2n}\big(\frac { \pi x} { t} \big)\cos^{2N}(  \pi t){\rm d t},
\end{align*}
after the change of variable $xt \to t$, 
Now, let us take    $x=m! $  so that $x/k=m!/k\in\N$ and $\frac{x}{k}\pm\frac{x}{m'}=\frac{m!}{k}\pm\frac{1}{4}\not\in\Z$
for  all $k\in\{1,\ldots,m\}$. Thus, the previous inequality becomes
$$
F_{n,N}(m!) \ge (m!)^{\alpha-1}\, \sum_{k=1}^{m} \big(\frac{1}{k}-\frac{1}{m'}\big)^\alpha  \int_{\frac{m!}{k}-\frac{1}{4}}^{\frac{m!}{k}+\frac{1}{4} } \cos^{2n}\big(\frac { \pi m!}{ t} \big)	\cos^{2N}( \pi  t){\rm d}t.
$$
 Again, we apply Theorem  \ref{T-approx-cos} by taking $f(t)= 
\cos^{2n}(\pi m!/t)$  see
 that 
\begin{align*}
\lim_{N} \sqrt {N\pi}\int_{\frac{m!}{k}-\frac{1}{4}}^{\frac{m!}{k}+\frac{1}{4} } \cos^{2n}\big(\frac { \pi m!}{ t} \big)	\cos^{2N}( \pi  t){\rm d}t 
& =f\big(\frac{m!}{k}\big)+f\big(\frac{m!}{k}-1\big)\\
&=
 \cos^{2n}(k\pi)+\cos^{2n}\left(k\pi\frac{m!}{m!-k}\right).
\end{align*}
Since for  any given $k\in\{1,\ldots,m\}$ one has $\cos^{2n}(k\pi)=1$ and
$$
\lim_{n}\cos^{2n}\left(k\pi\frac{m!}{m!-k}\right)=
\lim_{n}\cos^{2n}\left(\pi\frac{k}{m!/k-1}\right)=0,
$$
we infer
\begin{equation*}\lim_{n} \lim_N \sqrt {N\pi}F_{n,N}(m!)\ge (m!)^{\alpha-1}\,  \sum_{k=1}^{m} \big(\frac{1}{k}-\frac{1}{4(m!)}\big)^\alpha,
\end{equation*}
which, together with \eqref{est-sigma-a1}, implies that
 $$
G_\alpha(m!)= \frac{\sigma_\alpha(m!)}{(m!)^\alpha} \ge  \sum_{k=1}^{m}\frac{1}{k^\alpha}+\Delta_\alpha(m),
 $$
 where 
 $$
 \Delta_\alpha(m):=\sum_{k=1}^{m}\left( \big(\frac{1}{k}-\frac{1}{4(m!)}\big)^\alpha-\frac{1}{k^\alpha}\right).
 $$
 Since
 by the mean value theorem we see that
 $$
 |\Delta_\alpha(m)|\le \frac{\alpha/4}{m!}
 \sum_{k=1}^{m}\frac{1}{k^{\alpha-1}}\le \frac{\alpha/4}{(m-1)!},
 $$
we deduce 
$$ 
\lim_m G_\alpha(m!) = \zeta(\alpha),
$$ 
because, as already mentioned in \S\ref{intro}, if $\alpha>1$, then
$$
G_\alpha(m)\le 
\zeta(\alpha),\ \forall m\in\N.
$$
It is plain that in order to prove the same limit for $G_\alpha([1,2,\ldots,m])$ 
it suffices to take $x=[1,2,\ldots,m]$ and 
proceed in a completely analogous way.

Theorem \ref{T-Est-divisors} is completely proved.

\end{document}